\documentclass[a4paper]{article}
\usepackage{amsthm,amssymb,amsmath,enumerate,graphicx,epsf,bbold}

\newcommand{\COLORON}{0}
\newcommand{\NOTESON}{0}
\newcommand{\Debug}{0}
\usepackage[usenames,dvips]{color} 
\usepackage{amsthm,amssymb,amsmath,enumerate,graphicx,epsf,mathbbol,stmaryrd}
\usepackage[dvips, bookmarks, colorlinks=false]{hyperref}

\newcommand{\comment}[1]{}
\newcommand{\COMMENT}[1]{}

\definecolor{darkgray}{rgb}{0.3,0.3,0.3}
\newcommand{\defi}[1]{{\color{darkgray}\emph{#1}}}

%--------------------
% USEFUL STRUCTURES
%--------------------

%\newcommand{}{}

\comment{
	\begin{lemma}\label{}	
\end{lemma}
% *** ---- *** 
\begin{proof}

\end{proof}

\begin{theorem}\label{}
\end{theorem} 
% *** ---- *** 
\begin{proof} 	

\end{proof}

%\begin{conjecture}\label{}\end{conjecture} 
%\begin{corollary}\label{}\end{corollary} % *** ---- *** \begin{proof} 	\end{proof}
%\begin{example}\label{}\end{example} % *** ---- *** \begin{proof} 	\end{proof}
}

% THIS CREATES A PARAGRAPH WITH A (*) NEXT TO IT THAT YOU CAN REFER TO
%\begin{equation} \label{star} \begin{minipage}[c]{0.85\textwidth}
% bla
%\end{minipage}\ignorespacesafterend \tag{\ensuremath{*}} \end{equation}

%----------------------
% THEOREMS
%----------------------

\newtheorem{proposition}{Proposition}[section]

\newtheorem{theorem}[proposition]{Theorem}

\newtheorem{lemma}[proposition]{Lemma}

\newtheorem{conjecture}{{\color{red}Conjecture}}[section]

\newtheorem{problem}[conjecture]{{\color{red}Problem}}

\newtheorem{examp}[proposition]{Example}%[section]

% GERMAN

%\newcommand{\obda}{\textrm{w.l.o.g. }}

\newcommand{\FIG}{0}

\ifnum \NOTESON = 1 \newcommand{\note}[1]{ 

	\ 

	{\color{blue} \hspace*{-60pt} NOTE: \color{Turquoise}{\small  \tt \begin{minipage}[c]{1.1\textwidth}  #1 \end{minipage} \ignorespacesafterend }} 
	
	\ 
	
	}
\else \newcommand{\note}[1]{} \fi

\ifnum \Debug = 1 
\else  \fi

\ifnum \FIG = 1 
\else  \fi

\ifnum \Debug = 1 \usepackage[notref,notcite]{showkeys}
\fi

\ifnum \COLORON = 0 \renewcommand{\color}[1]{}
\fi

%   \epsfxsize=0.8\hsize

%--------------------
% LETTERS
%--------------------

\newcommand{\N}{\ensuremath{\mathbb N}}
\newcommand{\R}{\ensuremath{\mathbb R}}
\newcommand{\Z}{\ensuremath{\mathbb Z}}

%--------------------
% SYMBOLS
%--------------------

%--------------------
% MISC
%--------------------

\newcommand{\pth}[2]{\ensuremath{#1}\text{--}\ensuremath{#2}~path}

\newcommand{\pths}[2]{\ensuremath{#1}\text{--}\ensuremath{#2}~paths}

\newcommand{\seq}[1]{\ensuremath{(#1_i)_{i\in\N}}} 
 %(sub-)sequence with a given index set
 %(sub-)sequence with subindex
 %ordinal-indexed sequence
 
 % family with given index set
 % family indexed by ordinal
%\newcommand{\fam}[1]{\fml{#1}} 
 % x-y flow
 % x-y flows

\newcommand{\g}{\ensuremath{G\ }}
\newcommand{\G}{\ensuremath{G}}

%----------------------
% REFERENCE
%----------------------

\newcommand{\Lr}[1]{Lemma~\ref{#1}}
\newcommand{\Tr}[1]{Theorem~\ref{#1}}

%----------------------
% VERBAL SHORTCUTS
%----------------------

%graphs
\newcommand{\lf}{locally finite}
\newcommand{\lfg}{locally finite graph}

%quantifying etc.

\newcommand{\fe}{for every}

\newcommand{\st}{such that}

\newcommand{\ti}{there is}
\newcommand{\tho}{there holds}

%common terms

%----------------------
% ENVIRONMENTS
%----------------------

%
%:=, das vernuenftig aussieht

\newcommand{\mySection}[2]{}

%--------------------
% PEOPLE & fast citing
%--------------------

%--------------------
%STANDARD LEMMATA
%--------------------

%------------- Graphs ---------------

 %how to cite

 %how to cite

 %how to cite

 %how to cite

 %how to cite	

 %how to cite	

 %how to cite	

%------------- Topology ---------------

 %how to cite	

\newcommand{\llr}{lamplighter}
\newcommand{\llg}{\llr\ graph}

\newcommand{\lapr}{\ensuremath{\wr}}
\newcommand{\rede}{switching edge}

\title{Lamplighter graphs do not admit harmonic functions of finite energy}

\author{Agelos Georgakopoulos\thanks{Supported by FWF grant P-19115-N18.} \medskip \\
  {Technische Universit\"at Graz}\\
  {Steyrergasse 30, 8010}\\
  {Graz, Austria}\\
}

\date{}
\begin{document}
\maketitle

\begin{abstract}
We prove that a \llg\ of a \lf\ graph over a finite graph does not admit a non-constant harmonic function of finite Dirichlet energy.
\end{abstract}

\begin{section}{Introduction}

The wreath product $G \wr H$ of two groups $G,H$ is a well-known concept. Cayley graphs of $G \wr H$ can be obtained in an intuitive way by starting with a Cayley graph of $G$ and associating with each of its vertices a lamp whose possible states are indexed by the elements of $H$, see below. Graphs obtained this way are called \llg s. A well-known special case are the Diestel-Leader \cite{DiLeCon} graphs $DL(n,n)$.

Kaimanovich and Vershik \cite[Sections 6.1, 6.2]{KaVeRan} proved that \llg s of infinite grids $\Z^d, d\geq 3$ admit non-constant, bounded, harmonic functions. Their construction had an intuitive probabilistic interpretation related to random walks on these graphs, which triggered a lot of further research on \llg s. For example, spectral properties of such groups are studied in \cite{DiSchiSpe,GrZuLam, LNWspe} and other properties related to random walks are studied in \cite{ErschDri,ErschGen,PiSaRan}. Harmonic functions on \llg s and the related Poisson boundary are further studied e.g.\ in \cite{BrWoPos,KaWoPoi,Sava}. Finally, Lyons, Pemantle and Peres  \cite{LPPRan} proved that the lamplighter graph of \Z\ over $\Z_2$ has the surprising property that  random walk with a drift towards a fixed vertex can move outwards faster than simple random walk.

It is known that the existence of a non-constant harmonic function of finite Dirichlet energy implies the existence of a non-constant bounded harmonic function \cite[Theorem~3.73]{soardi}. Given the aforementioned impact that bounded harmonic functions on \llg s have had, it suggests itself to ask whether these graphs have non-constant harmonic functions of finite Dirichlet energy.  For \llg s on a grid it is known that no such harmonic functions can exist, since  the corresponding groups are amenable and thus admit no non-constant harmonic functions of finite Dirichlet energy \cite{MeSoExt}. A.~Karlsson (oral communication) asked whether this is also the case for graphs of the form $T \wr \Z_2$ where $T$ is any regular tree. In this paper we give an affirmative answer to this question. In fact, the actual result is much more general:

\begin{theorem}\label{main}
Let $G$ be a connected \lfg\ and let $H$ be a connected finite graph with at least one edge. Then $G \lapr H$ does not admit any non-constant harmonic function of finite Dirichlet energy. 
\end{theorem}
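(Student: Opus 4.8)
The plan is to reduce \Tr{main} to a single statement about the behaviour of $f$ at infinity, and then to close the argument by an energy–minimisation principle. Write $L:=G\lapr H$, and let $f$ be harmonic on $L$ with $\sum_{e}(df(e))^2<\infty$, where $df(e)$ denotes the difference of $f$ across the oriented edge $e$. The reduction I aim for is: \emph{any two rays of $L$ along which $f$ converges have the same limit.} Granting this, $f$ has a single ``boundary value'' $c$, and since a harmonic function of finite energy minimises energy subject to its own boundary values, $f-c$ must be the zero minimiser, i.e.\ $f\equiv c$. (Equivalently one may run this through the Royden boundary, but I would keep it combinatorial.)

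The engine for comparing limits is the following elementary estimate, which respects local finiteness. If $x,y$ are joined by a path $P$ then Cauchy--Schwarz gives $|f(x)-f(y)|^2\le|P|\cdot\sum_{e\in P}(df(e))^2$. Hence if a ray $R$ and an infinite vertex set $U$ are joined by infinitely many \emph{pairwise edge–disjoint} paths of \emph{bounded} length, then the summand $\sum_{e\in P}(df(e))^2$ is summable over these paths and so tends to $0$, whence the endpoints of the $k$-th path have asymptotically equal $f$–values; if $f$ converges along $R$ and along the chosen points of $U$, the two limits coincide. The star--comb lemma is exactly what produces such a configuration: applied to an infinite $U$ it yields either a subdivided star with infinitely many leaves in $U$ — impossible in a \lfg, since the centre has finite degree — or a ray together with infinitely many pairwise disjoint $U$–$R$ paths. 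The task is thus to realise a \emph{bounded–length} version of the comb between the relevant structures.

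Here the lamplighter hypothesis enters. Because $H$ has an edge, at every position of $G$ the lamplighter may move along a \rede, and because $G$ is infinite and connected there are infinitely many configurations; this lets me ``morph'' one escaping trajectory of the lamplighter into another by toggling lamps, and — crucially — the infinitely many available configurations furnish infinitely many \emph{edge–disjoint} realisations of essentially the same morphing move. I would organise these moves, together with König's Infinity Lemma, into the comb demanded by the engine, connecting a given ray $R$ to a set $U$ lying on a second prescribed ray $S$; iterating shows that the limits of $f$ along $R$ and along $S$ agree. The path–convergence (``hotchpotch'') lemma is then used to pass to a limit of the connecting paths and to control, for each finite edge set, the orderings induced along the rays, which is what allows the individual comparisons to be assembled into a statement about the two ends.

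The main obstacle is precisely the \emph{bounded length together with edge–disjointness} of the connectors. Toggling a single far–away lamp costs a number of edges of order $\mathrm{dist}_G$ to that lamp, so one cannot connect two ``parallel'' vertices that differ in a distant lamp by any short path at all; the comparison must instead be engineered so that all toggling happens at positions the ray already visits, while the infinitely many configurations are spent to keep the connectors edge–disjoint. A secondary difficulty is that a finite–energy function need not converge along an arbitrary ray, so I must either restrict attention to rays along which convergence holds and argue that there are enough of them, or average suitably; reconciling ``enough convergent rays'' with the disjoint bounded comb is the technical heart of the proof, and is where the structure of $G\lapr H$ — as opposed to a general transient graph such as the bare tree $G$, which does carry non-constant harmonic functions of finite energy — is indispensable.
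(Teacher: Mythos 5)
Your proposal has a genuine gap, and in fact two. First, your ``engine'' requires infinitely many pairwise edge-disjoint connecting paths of \emph{bounded} length, and you correctly identify producing these as the main obstacle --- but that obstacle cannot be overcome, because such paths generally do not exist in $G\lapr H$: for two disjoint rays $S,Q$ the distance $d_L(s_i,q_i)$ typically tends to infinity (the configurations come to differ in more and more lamps, and each differing lamp costs at least one \rede\ plus the walking needed to reach it), and the star--comb lemma gives no length control whatsoever. The paper's key quantitative observation is that bounded length is not needed: it suffices to have pairwise edge-disjoint \pths{S}{Q} $P_i$ with $|P_i|\le ci$, because then the very Cauchy--Schwarz estimate you invoke gives $\sum_{e\in P_i} w_\phi(e)\gtrsim u^2/(ci)$ for each $i$ (where $u>0$ is a uniform lower bound on $|\phi(s_i)-\phi(q_i)|$), and $\sum_i 1/i$ diverges, contradicting finite energy. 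Realising the linear bound is then the real work: one grows spanning trees $T_i$ of the portion of $G$ visited so far, toggles the lamps at $[s_i]$ and $[q_i]$ to force edge-disjointness from all earlier $P_j$, and resets the intervening lamps by a walk of length at most $3|E(T_i)|\,diam(H)$ inside the blow-up of $T_i$; since $|E(T_i)|$ grows by $2$ per step, $|P_i|=O(i)$. None of this construction, nor the weaker linear target that makes it feasible, appears in your sketch; your appeal to K\"onig's Infinity Lemma and to ``morphing trajectories'' does not address how edge-disjointness and length are simultaneously controlled.

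Second, your reduction is also incomplete: you need ``$f$ has the same limit along any two rays along which it converges'' to imply constancy via an energy-minimisation principle, but you neither justify this step nor resolve the issue of rays along which $f$ fails to converge (a problem you yourself flag). The paper avoids boundary theory entirely: by harmonicity every vertex has a neighbour of larger-or-equal value and one of smaller-or-equal value, so a non-constant $\phi$ lies on a double ray along which $\phi$ is monotone with a definite jump $u>0$ across one edge; the two halves of this double ray are exactly the rays $S,Q$ fed into the divergence argument above, with no convergence hypothesis needed. I would recommend abandoning the bounded-length comb and the boundary-value reduction in favour of this direct argument.
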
 
% *** ---- *** 

Indeed, we do not need to assume that any of the involved graphs is a Cayley graph. Lamplighter graphs on general graphs can be defined as in the usual case when all graphs are Cayley graphs; see the next section.

As an intermediate step, we prove a result (\Lr{lilem} below) that strengthens a theorem of Markvorsen,  McGuinness and Thomassen \cite{CThyperb} and might be applicable in order to prove that other classes of graphs do not admit non-constant Dirichlet-finite harmonic functions.
\end{section}

\begin{section}{Definitions}
We will be using the terminology of Diestel \cite{diestelBook05}. For a finite path $P$ we let $|P|$ denote the number of edges in $P$. For a graph \g and a set $U\subseteq V(G)$ we let $G[U]$ denote the subgraph of \g induced by the vertices in $U$. If \g is finite then its \defi{diameter} $diam(G)$ is the maximum distance, in the usual graph metric, of two vertices of $G$. 

Let \G, $H$ be connected graphs, and suppose that every vertex of \g has a distinct lamp associated with it, the set of possible states of each lamp being the set of vertices $V(H)$ of $H$. At the beginning all lamps have the same state $s_0\in V(H)$, and a ``\defi{\llr}'' is standing at some vertex of \G. In each unit of time the \llr\ is allowed to choose one of two possible moves: either walk to a vertex of $G$ adjacent to the vertex $x\in V(G)$ he is currently at, or switch the current state $s\in V(H)$ of $x$ into one of the states $s'\in V(H)$ adjacent with $s$. The \defi{\llg\ $G \lapr H$} is, then, a graph whose vertices correspond to the possible configurations of this game and whose edges correspond to the possible moves of the lamplighter. More formally, the vertex set of $G \lapr H$ is the set of pairs $(C,x)$ where $C: V(G) \to V(H)$ is an assignment of states \st\ $C(v)\neq s_0$ holds for only finitely many vertices $v\in V(G)$, and $x$ is a vertex of $G$ (the current position of the \llr). Two vertices $(C,x)$ and $(C',x')$ of $G \lapr H$ are joined by an edge if (precisely) one of the following conditions holds:
\begin{itemize}
\item $C=C'$ and $xx'\in E(G)$, or 
\item $x=x'$, all vertices except $x$ are mapped to the same state by $C$ and $C'$, and $C(x)C'(x)\in E(H)$.
\end{itemize}

This definition of $G \lapr H$ coincides with that of Erschler \cite{ErschGen}.

The \defi{blow-up} of a vertex $v\in V(G)$ in $L=G \lapr H$ is the set of vertices of $L$ of the form $(C,v)$. Similarly, the blow-up of a subgraph $T$ of $G$ is the subgraph of $L$ spanned by the blow-ups of the vertices of $T$. Given a vertex $x\in V(L)$ we let \defi{$[x]$} denote the vertex of $G$ the blow-up of which contains $x$. 
 
An edge of $L$ is a \defi{\rede} if it corresponds to a move of the lamplighter that switches a lamp; more formally, if it is of the form $(C,v)(C',v)$. For a \rede\ $e\in E(L)$ we let $[e]$ denote the corresponding edge of $H$.
A \defi{ray} is a $1$-way infinite path;  a $2$-way infinite path is called 
a \defi{double ray}. A \defi{tail} of a ray $R$ is an infinite (co-final) subpath of $R$.

A function $\phi:V(G) \to \R$ is \defi{harmonic}, if \fe\ $x\in V(G)$ \tho\ $\phi(x) = \frac{1}{d(x)}\sum_{xy\in E(G)} \phi(y)$, where $d(x)$ is the number of edges incident with $x$. Given such a function $\phi$, and an edge $e=uv$, we let $w_\phi(e):= (\phi(u) - \phi(v))^2$ denote the \defi{energy} dissipated by $e$. The \defi{(Dirichlet) energy} of $\phi$ is defined by $W(\phi):= \sum_{e\in E(G)} w_\phi(e)$.

\end{section}

\begin{section}{Proof of \Tr{main}}

We start with a lemma that might be applicable in order to prove that other classes of graphs do not admit non-constant Dirichlet-finite harmonic functions. This strengthens a result of \cite[Theorem~7.1]{CThyperb}.

\begin{lemma} \label{lilem}
Let \g be a connected \lfg\ \st\ \fe\ two disjoint rays $S,Q$ in \g \ti\ a constant $c$ and a sequence \seq{P} of pairwise edge-disjoint \pths{S}{Q}\ \st\ $|P_i| \leq c i$. Then \g does not admit a non-constant harmonic function of finite energy.
\end{lemma}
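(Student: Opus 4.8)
The plan is to argue by contradiction, assuming $\phi$ is a non-constant harmonic function on $G$ with $W(\phi) < \infty$. The engine of the argument is a Cauchy--Schwarz estimate along a path $P = x_0 x_1 \cdots x_k$: telescoping gives $\phi(x_k) - \phi(x_0) = \sum_{j<k}(\phi(x_{j+1}) - \phi(x_j))$, whence $(\phi(x_k) - \phi(x_0))^2 \le |P| \sum_{e \in P} w_\phi(e)$. Thus a path of length at most $ci$ across which $\phi$ drops by at least a fixed amount $\delta$ must dissipate energy at least $\delta^2/(ci)$; summing this over pairwise edge-disjoint paths yields a divergent harmonic series and contradicts finiteness of $W(\phi)$. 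The entire proof therefore reduces to producing two \emph{disjoint} rays $S, Q$ along which $\phi$ is separated by a uniform gap, so that each hypothesized edge-disjoint \pths{S}{Q}\ carries a drop bounded below by a positive constant.

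Constructing these two rays is the crux, and it is the only place where harmonicity is essential. Since $\phi$ is harmonic and non-constant on a connected graph, the discrete maximum principle shows it attains neither its supremum nor its infimum, so $\inf\phi < \sup\phi$; fix thresholds $a < b$ with $\inf\phi < a < b < \sup\phi$. The key observation is that \emph{every component of the superlevel set $\{v : \phi(v) > b\}$ is infinite}: if $C$ were a finite component, then $\phi \le b$ on its outer boundary $\partial C$ while $\phi > b$ throughout $C$, and applying the discrete maximum principle to the finite graph $C \cup \partial C$ gives $\max_C \phi \le \max_{\partial C}\phi \le b$, contradicting $\phi > b$ on $C$. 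Since $\sup\phi > b$, the set $\{\phi > b\}$ is non-empty, hence has a component, which is an infinite connected \lfg\ and therefore contains a ray $Q$; along $Q$ we have $\phi > b$ everywhere. The symmetric argument applied to $\{\phi < a\}$ produces a ray $S$ with $\phi < a$ everywhere.

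These rays are automatically vertex-disjoint, because $\phi < a < b < \phi$ separates their values, and for all $s \in S$, $q \in Q$ we have $\phi(q) - \phi(s) > b - a =: \delta > 0$. Feeding $S, Q$ into the hypothesis yields a constant $c$ and pairwise edge-disjoint \pths{S}{Q}\ $(P_i)_{i\in\N}$ with $|P_i| \le ci$. Writing $s_i \in S$ and $q_i \in Q$ for the endpoints of $P_i$, the gap gives $(\phi(s_i) - \phi(q_i))^2 > \delta^2$, so the Cauchy--Schwarz estimate gives $\sum_{e \in P_i} w_\phi(e) > \delta^2/(ci)$, and summing over the edge-disjoint $P_i$ gives $W(\phi) \ge \sum_i \delta^2/(ci) = \infty$, the desired contradiction.

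The routine ingredients I expect are merely the elementary Cauchy--Schwarz bound and the standard fact that an infinite connected \lfg\ contains a ray (K\"onig's Infinity Lemma). The genuinely load-bearing step, and the one I would take most care over, is the maximum-principle argument ruling out finite components of level sets: it is exactly what upgrades the bare hypothesis \textbf{non-constant} into the usable geometric conclusion that there exist two disjoint rays with a value gap.
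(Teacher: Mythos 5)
Your proof is correct, but it reaches the two separated rays by a genuinely different route than the paper. The paper picks an edge $x_0x_1$ with $\phi(x_1)>\phi(x_0)$ and extends it greedily in both directions into a double ray along which $\phi$ is non-decreasing (using that every vertex has a neighbour of $\phi$-value at least, resp.\ at most, its own); the two half-rays then serve as $S$ and $Q$, separated by the gap $u=\phi(x_1)-\phi(x_0)$. You instead fix thresholds $a<b$ strictly between $\inf\phi$ and $\sup\phi$ and show via the maximum principle that every component of $\{v:\phi(v)>b\}$ (and of $\{v:\phi(v)<a\}$) is infinite, hence contains a ray. Your route is slightly heavier but arguably more robust: it produces rays that are automatically vertex-disjoint (the paper's two sub-rays actually share the vertex $x_0$ and would need trimming), and it does not require checking that the greedy walk along non-decreasing $\phi$-values never revisits a vertex, a point the paper passes over with ``it is easy to see''. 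For the energy estimate the two arguments are morally identical: your Cauchy--Schwarz bound $(\phi(s_i)-\phi(q_i))^2\le |P_i|\sum_{e\in P_i}w_\phi(e)$ gives $\sum_{e\in P_i}w_\phi(e)\ge\delta^2/(ci)$ in one line, where the paper obtains the same lower bound of order $1/i$ by splitting the edges of $P_i$ into those with increment above and below $0.9\,u/(ci)$; both then conclude by divergence of the harmonic series summed over the pairwise edge-disjoint paths. The only point worth making explicit in your write-up is that a finite component $C$ of $\{v:\phi(v)>b\}$ has non-empty outer boundary (otherwise $C=V(G)$ by connectedness, contradicting $\inf\phi<b$), but this is immediate.
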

% *** ---- *** 
\begin{proof}

Let \g be a \lfg\ that admits a non-constant harmonic function $\phi$  of finite energy; it suffices to find two rays $S,Q$ in \g that do not satisfy the condition in the assertion.

Since $\phi$ is non-constant, we can find an edge $x_0 x_1$ satisfying $\phi(x_1) > \phi(x_{0})$. By the definition of a harmonic function, it is easy to see that $x_0 x_1$ must lie in  a double ray $D = \ldots x_{-1} x_0 x_1 \ldots$ \st\ $\phi(x_i) \geq \phi(x_{i-1})$ \fe\ $i\in\Z$; indeed, every vertex $x\in V(G)$ must have a neighbour $y$ \st\ $\phi(y) \geq \phi(x)$. 

Define the sub-rays $S = x_0 x_1 x_2 \ldots$ and $Q = x_0 x_{-1} x_{-2} \ldots$ of $D$. Now suppose \ti\ a sequence \seq{P} of pairwise edge-disjoint \pths{S}{Q}\ \st\ $|P_i| \leq c i$ for some constant $c$. 

Note that by the choice of $D$ \ti\ a bound $u>0$ \st\ $u_i:= |\phi(s_i) - \phi(q_i)|\geq u$ \fe\ $i$, where $s_i\in V(S)$ and $q_i\in V(Q)$ are the endvertices of $P_i$.

For every edge $e= xy$ let $f(e):= |\phi(y) - \phi(x)|$. % be the flow along \are\ induced by $\phi$. 
Let $X_i$ be the set of edges $e$ in $P_i$ such that $f(e)\ge
0.9 \frac{u}{ci}$, and let $Y_i$ be the set of all other edges in $P_i$. As $|P_i|\leq c i$ by assumption, the edges in $Y_i$ contribute less than $0.9 u$ to $u_i$, thus $\sum_{e\in X_j} f(e) > 0.1 u$ must hold. But since $f(e)\geq 0.9 \frac{u}{ci}$ for every $e\in X_j$, we have $\sum_{e\in X_j} w_\phi(e) >0.1 \times 0.9 \frac{u^2}{ci}$. As the sets $X_j$ are pairwise edge-disjoint, and as the series $\sum_i 1/i$ is not convergent, this contradicts the fact that $\sum_{e\in E(G)} w_\phi(e)$ is finite. 
\end{proof}

We now apply \Lr{lilem} to prove our main result.

\begin{proof}[Proof of \Tr{main}]
We will show that $L:= G \lapr H$ satisfies the condition of \Lr{lilem}, from which then the assertion follows. So let $S,Q$ be any two disjoint rays of $L$. 

Since $L$ is connected we can find a double ray $D$ in $L$ that contains a tail $S'$ of $S$ and a tail $Q'$ of $Q$. Let $s_0$ (respectively, $q_0$) be the first vertex of $S'$ (resp.\ $Q'$). Let $V_0$ be the set of vertices of $G$ the blow-up of which meets the path $s_0 D q_0$. Note that $V_0$ induces a connected subgraph of \G, because the lamplighter only moves along the edges of $G$. Thus we can choose a spanning tree $T_0$ of $G[V_0]$. 

For $i=1,2,\ldots$ we construct an \pth{S'}{Q'} $P_i$ as follows. Let $s_i$ be the first vertex of $S'$ not in the blow-up of $V_{i-1}$, and let $q_i$ be the first vertex of $Q'$ not in the blow-up of $V_{i-1}$. Let $V_i:= V_{i-1} \cup \{s_i,q_i\}$, and extend $T_{i-1}$ into a spanning tree $T_i$ of $G[V_i]$ by adding two edges incident with  $s_i$ and  $q_i$ respectively; such edges do exist: their blow-up contains the edges of $S', Q'$ leading into $s_i,q_i$ respectively. 

We now construct an \pth{s_i}{q_i}\ $P_i$. Pick a \rede\ $e= s_i s'_i$ incident with $s_i$. Then let $X_i$ be the unique path in $L$ from  $s'_i$ to a vertex $q^+_i$ with $[q^+_i] = [q_i]$ \st\ $X_i$ is contained in the blow-up of $T_i$. Pick a \rede\ $f = q^+_i q^-_i$ incident with $q^+_i$. Then follow the unique path $Y_i$ in $L$ from $q^-_i$ to a vertex $s^+_i$ with $[s^+_i]=[ s_i]$ \st\ $Y_i$ is contained in the blow-up of $T_i$. Let $e'= s^+_i s^-_i$ be the \rede\ incident with $s^+_i$ \st\ $[e']=[ e]$. Finally, let $Z_i$ be a path from $s^-_i$ to the unique vertex $q'_i$ with $[q_i q'_i ]=[f]$, \st\ the interior of $Z_i$ is contained in the blow-up of $V_{i-1}$ and $Z_i$ has minimum length under all paths with these properties. Such a path exists because every lamp at a vertex in $G - V_{i-1}$ has the same state in $s^-_i$ and $q'_i$; indeed, the lamps in $G - V_{i}$ were never switched in the above construction, the lamp at $[s_i]$ was switched twice on the way from $s_i$ to $s^-_i$ using the same \rede\ $[e]$, which means that its state in both endpoints of $Z_i$ coincides with that in $s_i$ and $q_i$, and finally the lamp at $[q'_i]$ has the same state in both endpoints of $Z_i$, namely the state $[f]$ leads to. Now set $P_i:= s_i s'_i X_i q^+_i q^-_i Y_i s^+_i s^-_i Z_i q'_i q_i$.

It is not hard to check that the paths $P_i$ are pairwise disjoint. Indeed, let $i<j\in \N$. Then, by the choice of the vertices $s_j, q_j$ and the construction of $P_j$, it follows that for every inner vertex $x$ of $P_j$, the configuration of $x$ differs from the configuration of any vertex in $P_i$ in at least one of the two lamps at $[s_j]$ and $[q_j]$.

It remains to show that \ti\ a constant $c$ \st\ $|P_i| \leq c i$ \fe\ $i$. To prove this, 
%we will show that $|P_{i+1}| - |P_i| \leq c'$ for some constant $c'$.
note that $|P_i| = |X_i| + |Y_i| + |Z_i| + 4$; we will show that the latter three subpaths grow at most linearly with $i$, which then implies that this is also true for $P_i$.

Firstly, note that $diam(T_{i}) - diam(T_{i-1}) \leq 2$ since $V(T_i):= V(T_{i-1}) \cup \{s_i,q_i\}$. By the choice of $X_i$ we have $|X_i| \leq diam(T_{i})$, from which follows that \ti\ a constant $c_1$ \st\ $|X_i| \leq c_1 i$. By the same argument, we have $|Y_i| \leq c_1 i$. 

It remains to bound the length of $Z_i$. For this, note that if $T$ is a finite tree and $v,w\in V(T)$, then there is a $v$-$w$~walk $W$ in $T$ containing all edges of $T$ and satisfying $|W|\leq 3 |E(T)|$; indeed, starting at $v$, one can first walk around the ``perimeter'' of $T$ traversing every edge precisely once in each direction ($2 |E(T)|$ edges), and then move ``straight'' from $v$ to $w$ (at most $|E(T)|$ edges). Thus, in order to choose $Z_i$, we could put a lamplighter at the vertex and configuration indicated by $s^-_i$, and let him move in $T_i \subset G$ along such a walk $W$ from $[s^-_i]$ to $[q'_i]$, and every time he visits a new vertex $x$ let him change the state of $x$ to the state indicated by $q'_i$. This bounds the length of $Z_i$ from above by $3|E(T_i)| diam(H)$, and since $|E(T_i)|-|E(T_{i-1})|=2$ and $H$ is fixed, we can find a constant $c_2$ \st\ $|Z_i| \leq c_2 i$ \fe\ $i$. This completes the proof that $|P_i|$ grows at most linearly with $i$.

Thus we can now apply \Lr{lilem} to prove that $G \lapr H$ does not admit a non-constant harmonic function of finite energy.
\end{proof}

\begin{problem}
 Does the assertion of \Tr{main} still hold if $H$ is an infinite \lfg?
\end{problem}

\Lr{lilem} might be applicable in order to prove that other classes of graphs do also not admit non-constant Dirichlet-finite harmonic functions. For example, it yields an easy proof of the (well-known) fact that infinite grids  have this property.
\end{section}

\bibliographystyle{plain}
\bibliography{collective}
\end{document}